\DeclareMathOperator{\mex}{mex}
\newcommand{\imark}{i\textsc{-Mark}}
\newcommand{\N}{{\mathbb N}}
\newcommand{\R}{{\mathbb R}}
\newtheorem{theorem}{Theorem}
\newtheorem{lemma}[theorem]{Lemma}
\newtheorem{observation}[theorem]{Observation}
\theoremstyle{definition}
\newtheorem{definition}[theorem]{Definition}
\title{A convergence technique for the game $\imark$}
\author{Gabriel Nivasch\thanks{Corresponding author. Ariel University, Ariel, Israel. \texttt{gabrieln@ariel.ac.il}.}\and Oz Rubinstein\thanks{Work was done while the author was at Ariel University. \texttt{rubinsteinoz5@gmail.com}.}}
\date{}
\begin{document}
	
\maketitle

\begin{abstract}
	The game of $\imark$ is an impartial combinatorial game introduced by Sopena (2016). The game is parametrized by two sets of positive integers $S$, $D$, where $\min D\ge 2$. From position $n\ge 0$ one can move to any position $n-s$, $s\in S$, as long as $n-s\ge 0$, as well as to any position $n/d$, $d\in D$, as long as $n>0$ and $d$ divides $n$. The game ends when no more moves are possible, and the last player to move is the winner. Sopena, and subsequently Friman and Nivasch (2021), characterized the Sprague--Grundy sequences of many cases of $\imark(S,D)$ with $|D|=1$. Friman and Nivasch also obtained some partial results for the case $\imark(\{1\},\{2,3\})$.
	
	In this paper we present a convergence technique that gives polynomial-time algorithms for the Sprague--Grundy sequence of many instances of $\imark$ with $|D|>1$. In particular, we prove our technique works for all games $\imark(\{1\},\{d_1,d_2\})$.
	
	Keywords: Combinatorial game, impartial game, Sprague--Grundy function, convergence, dynamic programming.
\end{abstract}

\section{Introduction}

The game $\imark$ is an impartial combinatorial game introduced by Sopena \cite{SOPENA201690}. The game is parametrized by two sets of positive integers $S$ and $D$, where $\min D\ge 2$. A position in the game is given by a single natural number $n$. There are two players, and in each turn, a player can move from position $n$ to position $n-s$ for for some $s\in S$ (assuming $n-s\ge 0$), or to position $n/d$ for some $d\in D$, if $n/d$ is a positive integer. The game ends when no more moves are possible.

As usual in the theory of impartial games, in \emph{normal play} the last player to move is the winner (or in other words, the player with no legal moves is the loser).

For example, in the game $\imark(\{1\},\{2,3\})$, starting from position $10$, the first player can move to position $9$ or $5$. If she moves to position $9$, then the second player can move to position $8$ or $3$, and so on. The player who moves to position $0$ will be the winner.

\subsection{The Sprague--Grundy theory of impartial games}

An \emph{impartial game} is a two-player game in which there is perfect information, there is no chance involved, and, from each position, the legal moves available are the same for both players (see e.g.~Conway \cite{conway2000numbers}, Berlekamp et.~al~\cite{WINNING}). An impartial game can be represented by a directed acyclic graph $G=(V,E)$, where each position in the game is represented by a vertex, and each legal move from one position to another is represented by an edge. We assume that the game is \emph{well-founded}, i.e.~from no position in the game there exists an infinite sequence of moves.

We say that position $v$ is a \emph{follower} of position $u$ if one can get from position $u$ to position $v$ in one move.

In an impartial game, positions are classified into \emph{$P$-positions} (in which, with perfect play, the previous player to have moved is the winner), and \emph{$N$-positions} (in which, with perfect play, the next player to move is the winner). $P$- and $N$-positions are characterized recursively as follows: If all the followers of position $v$ are $N$-positions, then $v$ is a $P$-position. Otherwise (if $v$ has at least one follower which is a $P$-position), then $v$ is an $N$-position. The base cases of this characterization are the \emph{sinks}, or positions with no followers, which are $P$-positions.

Let $G_1=(V_1,E_1)$ and $G_2=(V_2,E_2)$ be two impartial games. Their \emph{sum} $G'=G_1+G_2$ is the impartial game played as follows: Each position of $G'$ consists of a pair of positions $v_1v_2$ with $v_1\in V_1$ and $v_2\in V_2$. On each turn, a player chooses one of the games $G_1$, $G_2$ and plays on it, leaving the other game untouched. The game ends when no moves are possible on $G_1$ nor on $G_2$. Formally, $G'=(V',E')$ where
\begin{align*}
	V'&=V_1\times V_2,\\
	E'&=\{(v_1v_2, w_1v_2):(v_1,w_1)\in E_1\}\\&\qquad\cup\{(v_1v_2,v_1w_2):(v_2,w_2)\in E_2\}.
\end{align*}

In order to know whether a position $v'=v_1v_2$ in the game $G'$ is a $P$- or an $N$-position, it is not enough to know whether $v_1$ and $v_2$ are $P$- or $N$-positions: It can be shown that by induction that, if both $v_1$ and $v_2$ are $P$-positions, then $v'$ is a $P$-position, while if exactly one of $v_1, v_2$ is a $P$-position, then $v'$ is an $N$-position. However, if both $v_1, v_2$ are $N$-positions, then $v'$ could be either a $P$- or an $N$-position.

The \emph{Sprague--Grundy function} (or \emph{Grundy function} for short) enables us to correctly play sums of impartial games (Sprague~\cite{SPRAGUE}, Grundy~\cite{GRUNDY}). Denote $\N=\{0,1,2,\ldots\}$. Let $G$ be an impartial game. To each position $v$ of $G$ let us assign a \emph{Grundy value} $\mathcal G(v)\in\N$ recursively as follows: $\mathcal G(v) = \mex{\{\mathcal G(w): \text{$w$ is a follower of $v$}\}}$. Here $\mex S = \min(\N\setminus S)$ denotes the \emph{minimum excluded value}. Then, the following two properties can be proven by induction:
\begin{itemize}
	\item We have $\mathcal G(v)=0$ if and only if $v$ is a $P$-position.
	\item If $v'=v_1v_2$ is a position in the game $G'=G_1+G_2$, then $\mathcal G(v')=\mathcal G(v_1)\oplus\mathcal G(v_2)$, where $\oplus$ is the \emph{nim-sum}, or bitwise-XOR, operation (so for example $3\oplus 5=011_2\oplus 101_2=110_2=6$).
\end{itemize}

Hence, by knowing the Grundy function of an impartial game we can correctly play this game, as well as the sum of this game with any other impartial game we also know the Grundy function of.

Since the ``sum of games'' operation is commutative and associative, as is the nim-sum operation, the Sprague--Grundy function enables us to play sums of an arbitrary number of games.

\subsection{\boldmath Previous work on $\imark$}

Let $\imark(S,D)$ be a fixed $\imark$ game. Suppose we are given an integer $n$, and we want to find the value of $\mathcal G(n)$. The naive way of doing this is to compute the entire sequence $\mathcal G(0), \mathcal G(1), \ldots, \mathcal G(n)$ from the bottom up. This approach takes time and space $\Theta(n)$. However, since the input size is $\log_2 n$, this naive algorithm takes time and space that is exponential in the size of the input.

To \emph{solve} a particular instance $\imark(S,D)$ means to find a polynomial-time algorithm that, given a position $n$, returns $\mathcal G(n)$. This can be done by identifying a pattern in the Grundy sequence of the game (for example, all $\imark$ games with $D=\emptyset$ are eventually periodic), or by some other method.

Sopena \cite{SOPENA201690} examined and solved many cases of $\imark$ with $|D|=1$. Subsequently, Friman and Nivasch \cite{FN} solved some additional cases, also with $|D|=1$, that were left as open problems in \cite{SOPENA201690}. In addition, \cite{FN} derived some partial results for the case $\imark(\{1\},\{2,3\})$ (which is arguably the simplest case with $|D|>1$): They derived upper bounds for the maximum gaps between consecutive occurrences of the values $0$, $1$, and $2$ in the Grundy sequence. They observed experimentally that there also seems to be an upper bound on the gap between consecutive occurrences of Grundy value $3$, though they did not manage to prove that.

\subsection{Our results}

In this paper we present a \emph{convergence}-based technique that yields polynomial-time algorithms for many instances of $\imark$. In particular, we prove that our convergence technique works in all instances of the form $\imark(\{1\},\{d_1, d_2\})$. Hence, all games $\imark(\{1\},\{d_1, d_2\})$ have polynomial-time algorithms. We also present experimental results regarding convergence in other instances of $\imark$.

Our convergence technique for $\imark$ is somewhat similar to the convergence approach used for the game of Wythoff in \cite{nivasch2005more}.

\section{The convergence technique}

If $s\in S$ then we say that position $n-s$ is a \emph{subtraction follower} of $n$. If $d\in D$ and $d|n$ then we say that $n/d$ is a \emph{division follower} of $n$.

In general in impartial games, if a position has $k$ followers, then its Grundy value must be a number between $0$ and $k$. In the case of $\imark(S,D)$, if $n\ge \max S$, then the number of followers of $n$ is $|S|$ plus the number of elements of $D$ that divide $n$. Let $\varphi(n)$ denote the number of followers of $n$. Hence, $0\le \mathcal G(n)\le \varphi(n)$.

The idea behind our technique is the following ``convergence'' phenomenon. Suppose we want to compute $\mathcal G$ on some interval $J=[x,y]=\{x,x+1,\ldots,y\}$. In order to do this, we first extend our interval downwards by an appropriate constant amount $c$ (which will hopefully depend only on the parameters $S$ and $D$ of the game), getting the interval $I=[x-c,y]$. Now suppose that for every $d\in D$ we have already computed $\mathcal G$ on the interval $I_d=\{\lfloor(x-c)/d\rfloor, \cdots, \lfloor y/d\rfloor\}$. In order to compute $\mathcal G$ on the interval $I$ itself, we would still need to know the values of $\mathcal G$ on the $s=\max S$ first elements of $I$. Once these Grundy values are given, we could continue the computation of $\mathcal G$ on $I$ from the bottom up.

However, if we do not know the values of $\mathcal G$ on the first $s$ elements of $I$, there is still a possibility that the following will work: We try all possible assignments of Grundy values to these $s$ positions. For each assignment, we continue the computation of $\mathcal G$ upwards. If, after some number of steps, all computations happen to agree with each other on $s$ consecutive values, then they will keep agreeing with each other from that point on. In this case, we say that ``convergence'' has occurred. If convergence occurs after $c$ or fewer steps, then we will obtain the desired Grundy values on our interval $J$. See Figure \ref{fig_convergence} for an illustration of convergence with $S=\{1\}$.

Let us formulate the convergence phenomenon formally. Let a game $\imark(S,D)$ be fixed. Let $s=\max S$. Let $n$ be a starting position. A \emph{guess seed} for starting position $n$ is a sequence $\overline \sigma = (\sigma_n, \ldots, \sigma_{n+s-1})$ where $0\le \sigma_i\le\varphi(i)$ for each $n\le i<n+s$. The \emph{guess sequence} $\mathcal G_{\overline\sigma}$ corresponding to the guess seed $\sigma$ is the infinite sequence $\mathcal G_{\overline\sigma}=(\mathcal G_{\overline\sigma}(n), \mathcal G_{\overline\sigma}(n+1),\ldots)$, where $\mathcal G_{\overline\sigma}(i)=\sigma_i$ for $n\le i<n+s$, and $\mathcal G_{\overline\sigma}(m) = \mex \{\mathcal G_{\overline\sigma}(m-s_i):s_i\in S\}\cup \{\mathcal G(m/d_i):d_i\in D, d_i|m\}$ for all $m\ge n+s$. Meaning, the sequence $\mathcal G_{\overline\sigma}$ is computed from the bottom up by starting from the seed $\overline\sigma$ as presumed Grundy values, using previously computed guesses for the subtraction followers, and using the actual Grundy values for the division followers.

Let $\Sigma_n$ be the set of all possible guess seeds for starting position $n$. Note that $|\Sigma_n|=\prod_{i=n}^{n+s-1}(1+\varphi(i))$. One of these seeds is the correct one, i.e.~the one that satisfies $\sigma_i=\mathcal G(i)$ for all $n\le i<n+s$.

We say that \emph{convergence occurs} in $c$ steps starting at position $n$ if $c\ge s$ and $\mathcal G_{\overline\sigma}(m)=\mathcal G_{\overline\sigma'}(m)$ for all pairs of seeds $\overline\sigma, \overline\sigma'\in\Sigma_n$ and all $n+c\le m<n+c+s$. Note that in this case, we will have $\mathcal G_{\overline\sigma}(m)=\mathcal G_{\overline\sigma'}(m)$ for all $m\ge n+c+s$ as well. Since one of the seeds in $\Sigma_n$ is the correct one, convergence implies that the values $\mathcal G_{\overline\sigma}(m)$ equal $\mathcal G(m)$ for all $m\ge n+c$.

\begin{figure}
	\centering
	\includegraphics[width=16cm]{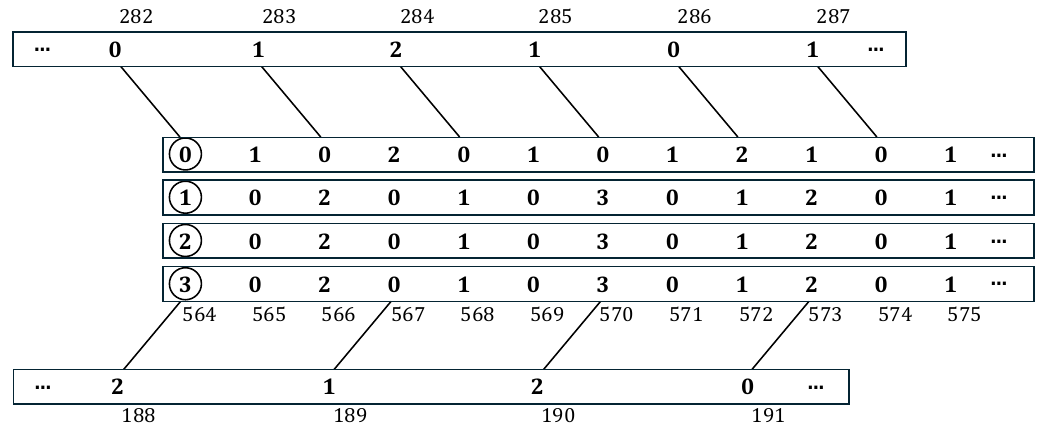}
	\caption{\label{fig_convergence}In the game $\imark(\{1\},\{2,3\})$, convergence starting from position $564$ occurs after $10$ steps. The four sequences at the center are the guess sequences, and the circled number in each guess sequence is its seed.}
\end{figure}

We say that for the game $\imark(S,D)$ convergence occurs in $c$ steps, if for every starting position $n$ convergence occurs in at most $c$ steps.

\subsection{The polynomial-time algorithm}

The convergence phenomenon, when it occurs, yields a polynomial-time algorithm. The general idea behind the algorithm is as follows: In order to compute $\mathcal G(n)$, we try to compute $\mathcal G$ on an interval $I$ that ends at $n$, using the above-mentioned convergence approach. Since convergence takes some steps to occur, only a suffix of the interval $I$ will contain definite Grundy values. In order to perform the computation on $I$, we recursively compute $\mathcal G$ on lower intervals. In general, some of the lower intervals will be used multiple times. Hence, a dynamic programming approach will prevent us from computing the same values multiple times, so the running time will be kept polynomial in $\log n$.

Let us now describe the algorithm in detail.

The following technical lemma will be helpful:

\begin{lemma}\label{lem_floors}
	Let $x\in\R^+$ be a positive real number, and let $n_1,\ldots,n_k\in\N^+$ be positive integers. Then
	\begin{equation}\label{eq_floors}
		\lfloor\lfloor\cdots\lfloor \lfloor \lfloor x\rfloor / n_1\rfloor /n_2\rfloor /\cdots \rfloor / n_k\rfloor = \lfloor x/(n_1 n_2\cdots n_k)\rfloor.
	\end{equation}
\end{lemma}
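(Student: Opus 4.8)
The plan is to prove the identity by induction on $k$, the number of nested floor divisions. The base case $k=1$ is the statement $\lfloor \lfloor x\rfloor / n_1\rfloor = \lfloor x/n_1\rfloor$, which is itself the single most important fact here: replacing a real number by its floor before an integer division does not change the result. I would prove this base case directly using the defining property of the floor function. Write $\lfloor x/n_1\rfloor = q$, so that $q \le x/n_1 < q+1$, i.e.\ $q n_1 \le x < (q+1)n_1$. Since $q n_1$ is an integer and $q n_1 \le x$, we get $q n_1 \le \lfloor x\rfloor$; and since $\lfloor x\rfloor \le x < (q+1)n_1$ with $(q+1)n_1$ an integer, we get $\lfloor x\rfloor < (q+1)n_1$. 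Dividing by $n_1$ and taking floors yields $q \le \lfloor x\rfloor/n_1 < q+1$, hence $\lfloor \lfloor x\rfloor/n_1\rfloor = q = \lfloor x/n_1\rfloor$, as desired.

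For the inductive step, I would assume the identity holds for $k-1$ nested divisions and prove it for $k$. The outermost operation applies $\lfloor \cdot / n_k\rfloor$ to the inner expression. Abbreviate the inner nested expression (with divisors $n_1,\ldots,n_{k-1}$) by $y$; by the induction hypothesis, $y = \lfloor x/(n_1\cdots n_{k-1})\rfloor$. The full left-hand side is then $\lfloor y/n_k\rfloor$. Now $y$ is already an integer, but more importantly it has the form $\lfloor z\rfloor$ for the real number $z = x/(n_1\cdots n_{k-1})$, so I can apply the base-case fact once more, with $z$ in the role of $x$ and $n_k$ in the role of $n_1$, to conclude $\lfloor y/n_k\rfloor = \lfloor \lfloor z\rfloor / n_k\rfloor = \lfloor z/n_k\rfloor = \lfloor x/(n_1\cdots n_{k-1}n_k)\rfloor$. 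This matches the right-hand side of \eqref{eq_floors} for $k$, completing the induction.

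I do not expect any genuine obstacle in this proof; it is entirely elementary. The one point requiring care is the clean statement and proof of the base-case fact $\lfloor \lfloor x\rfloor / m\rfloor = \lfloor x/m\rfloor$ for real $x$ and positive integer $m$, since the whole argument reduces to applying this fact repeatedly. The only subtlety worth flagging is that this fact genuinely uses the integrality of $m$: the bracketing constants $q n_1$ and $(q+1)n_1$ must be integers in order to push the inequalities through the inner floor. As long as that is stated explicitly, the induction goes through without any computation beyond the two inequality manipulations above.
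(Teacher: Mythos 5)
Your proof is correct and follows essentially the same approach as the paper: both reduce the statement to the key identity $\lfloor\lfloor x\rfloor/n\rfloor=\lfloor x/n\rfloor$ and then collapse the nested floors one at a time (the paper phrases this informally as removing the floor operators one by one, you phrase it as induction on $k$). The only difference is cosmetic: you prove the base case directly by sandwiching $\lfloor x\rfloor/n_1$ between $q$ and $q+1$, whereas the paper derives a contradiction from a hypothetical integer $m$ with $\lfloor x\rfloor/n_1 < m \le x/n_1$.
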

	
\begin{proof}
	We first prove the claim for the case $k=1$. Suppose for a contradiction that $\lfloor \lfloor x\rfloor/n_1\rfloor < \lfloor x/n_1\rfloor$. Then there exists an integer $m$ that satisfies $\lfloor x\rfloor / n_1 < m\le x/n_1$. But this implies $\lfloor x\rfloor < mn_1 \le x$, implying that $mn_1$ is not an integer. Contradiction.
	
	Hence, in the left-hand side of (\ref{eq_floors}) we can remove one by one all the floor operators except for the outermost one, obtaining the right-hand side of (\ref{eq_floors}).
\end{proof}

The algorithm is as follows: Suppose that on $\imark(S,D)$, $D=\{d_1, \ldots, d_k\}$, convergence occurs in at most $c$ steps. Let $n$ be the given a position on which we want to compute $\mathcal G$. Define the set $M\subseteq\N$ by
\begin{equation*}
	M=\{\lfloor n/(d_1^{a_1}\cdots d_k^{a_k})\rfloor : a_1, \ldots, a_k\in\N\}.
\end{equation*}
Note that $n\in M$ (by taking $a_1=\cdots=a_k=0$). And note that $|M|=O(\log^{k} n)$, since taking $a_i> \log_2 n$ for some $i$ gives a result of $0$.

For each $m\in M$ define the intervals $I_m=[m-2c,m]$ and $J_m=[m-c,m]$. In order to compute $\mathcal G(n)$, we will compute $\mathcal G$ on all intervals $J_m\cap\N$, $m\in M$, by increasing order of $m$.

This is done as follows: Let $m\in M$, and suppose we have already computed $J_{m'}\cap\N$ for all $m'\in M$, $m'<m$.

If $m-2c\le 0$ then we just compute $\mathcal G$ on $J_m\cap \N$ from the bottom up in the naive way.

Now suppose $m-2c> 0$. By Lemma \ref{lem_floors}, for each $d_i\in D$ we have $\lfloor m/d_i\rfloor\in M$. Furthermore, for each $x\in I_m$ that is divisible by $d_i$, we have $x/d_i\in J_{\lfloor m/d_i\rfloor}$. Meaning, we have the actual Grundy values of all the division followers of the positions in $I_m$. We compute all possible guess sequences $\mathcal G_{\overline\sigma}$ starting from the beginning of $I_m$, until we reach the end of $I_m$. By assumption, convergence will occur in at most $c$ steps, so all the guess sequences will agree with the real Grundy values within the interval $J_m$. See Figure \ref{fig_alg}.

\begin{figure}
	\centering
	\includegraphics{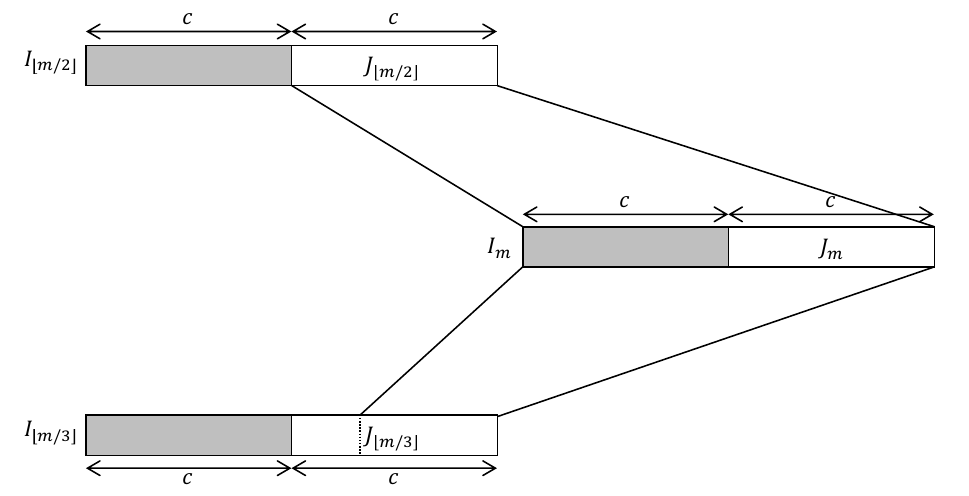}
	\caption{\label{fig_alg}Computation of guess sequences on an interval $I_m$ based on the Grundy values of previously computed intervals. In this example we have $D=\{2,3\}$. In the grayed parts of the intervals, convergence did not necessarily occur, so the guess sequences might not necessarily agree. In the white parts of the intervals, convergence already occured, so we know the actual Grundy values.}
\end{figure}

If in an interval $I_m$ convergence does not occur within $c$ steps, then the algorithm will detect this and return an error. Under no condition will the algorithm return an incorrect answer.

The running time and space of this algorithm is $O(c\log^{|D|} n)$.

\section{\boldmath Convergence in $\imark(\{1\},\{d_1,d_2\})$}

In this section we prove the following:

\begin{theorem}\label{thm_conv_1d1d2}
	For every $1<d_1<d_2$ there exists a constant $c=c(d_1,d_2)$ such that in the game $\imark(\{1\},\{d_1,d_2\})$ convergence occurs in at most $c$ steps.
\end{theorem}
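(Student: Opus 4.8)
The plan is to exploit the fact that here $s=\max S=1$, so a guess seed is a single value and the entire guess sequence is generated by the one-term recurrence $\mathcal{G}_{\overline\sigma}(m)=f_{A_m}(\mathcal{G}_{\overline\sigma}(m-1))$, where $A_m=\{\mathcal{G}(m/d):d\in\{d_1,d_2\},\ d\mid m\}$ is the (seed-independent) set of division-follower values and $f_A(g)=\mex(\{g\}\cup A)$. First I would record the elementary but decisive structural fact about this map: writing $\mu=\mex A$ and $\mu'=\mex(A\cup\{\mu\})>\mu$, one has $f_A(g)=\mu$ whenever $g\ne\mu$, and $f_A(g)=\mu'$ when $g=\mu$. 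Thus $f_A$ has image of size at most two. Tracking the set $P_m=\{\mathcal{G}_{\overline\sigma}(m):\overline\sigma\in\Sigma_n\}$ of all guess values, this gives $|P_m|\le 2$ for every $m>n$, with $P_m=f_{A_m}(P_{m-1})$; moreover, once $|P_m|=1$ it stays $1$. Since $s=1$, convergence in $c$ steps is exactly the statement $|P_{n+c}|=1$, so it suffices to bound, uniformly in $n$, the number of steps until the two-element live set collapses to a singleton.

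The key ingredient is a bounded-gap property for the Grundy value $0$, which I would prove by a pure density argument requiring no prior knowledge of $\mathcal{G}$. If $\mathcal{G}(i-1)\ne 0$ and $\mathcal{G}(i)\ne 0$, then from $\mathcal{G}(i)=\mex(\{\mathcal{G}(i-1)\}\cup A_i)\ne 0$ we must have $0\in A_i$; in particular $i$ is divisible by $d_1$ or $d_2$, and one of its division followers is a $P$-position. Consequently, in any maximal run of consecutive positions with nonzero Grundy value, all but the first must be multiples of $d_1$ or $d_2$. Since the integers divisible by $d_1$ or $d_2$ have density at most $\tfrac1{d_1}+\tfrac1{d_2}\le\tfrac56<1$, such a run has length at most $2/(1-\tfrac1{d_1}-\tfrac1{d_2})+O(1)$, a constant depending only on $d_1,d_2$. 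Hence Grundy value $0$ occurs with gaps bounded by some $G_0=G_0(d_1,d_2)$, and therefore the ``value-$0$ division'' positions $m=d_1k$ with $\mathcal{G}(k)=0$ occur with gaps at most $d_1 G_0$.

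Finally I would feed these value-$0$ divisions into the live-set dynamics. The remaining work is a finite case check over the six possible two-element sets $P\subseteq\{0,1,2,3\}$: one verifies that applying $f_A$ at a value-$0$ division (where $0\in A$) sends every such $P$ either directly to a singleton or to the set $\{1,2\}$, and that from $\{1,2\}$ the live set collapses as soon as a non-division step, or a division with a nonzero-valued quotient, occurs. Because a run of consecutive multiples of $d_1$ or $d_2$ is bounded (again by the same density estimate), the latter happens within a bounded number of steps. Combining the three pieces yields convergence within $c=O(d_1 G_0)=O\!\big(d_1/(1-\tfrac1{d_1}-\tfrac1{d_2})\big)$ steps, uniformly in $n$. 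I expect the main obstacle to be the finite automaton analysis of the third step: one must check that \emph{every} reachable two-element state is driven toward convergence by a value-$0$ division, so that no pathological configuration can persist between consecutive value-$0$ divisions. The first two steps, by contrast, are short once the collapse structure of $f_A$ and the density bound are in hand.
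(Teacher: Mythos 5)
Your proposal is correct, but it takes a genuinely different route from the paper's proof. The paper argues via \emph{bottlenecks} (positions divisible by neither $d_1$ nor $d_2$, whose only follower is their predecessor, so that $\mathcal G(n-1)=0$ or $\mathcal G(n)=0$) and \emph{conducive} positions ($n$ is $d_i$-conducive if $d_in+1$ is a bottleneck): if $\mathcal G(n)=0$ and $n$ is $d_i$-conducive, every guess sequence is forced to a nonzero value at $d_in$ and hence to the correct value $0$ at the bottleneck $d_in+1$. To exhibit such configurations inside a bounded window, the paper splits into cases: $d_1=2,\ d_2\ge 4$ and $2<d_1<d_2$ are handled by short divisibility arguments (two consecutive $d_i$-conducive positions, one of them a bottleneck), while $(d_1,d_2)=(2,3)$ needs two dedicated lemmas about runs of multiples of $3$ with Grundy value $0$. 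Your argument runs the same underlying phenomenon through a cleaner, uniform engine: the live-set dynamics $|P_m|\le 2$ (valid because $s=1$), the density lemma giving bounded gaps between Grundy-value-$0$ positions uniformly in $d_1,d_2$ (for $(2,3)$ this gap bound was previously obtained in \cite{FN}), and the collapse at the first position with $0\notin A_m$ following a value-$0$ division. In fact the paper's mechanism is exactly your special case in which that position arrives \emph{immediately} after the value-$0$ division (namely the bottleneck $d_in+1$); relaxing ``immediately'' to ``within boundedly many steps,'' which your density estimate supplies, is precisely what eliminates the case analysis and puts $(2,3)$ on the same footing as all other pairs, with a noticeably smaller constant there (roughly $15$ steps versus the paper's proven $64$). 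One slip to correct when you execute the finite case check: at a value-$0$ division a two-element live set maps into $\{\mex A,\ \mex(A\cup\{\mex A\})\}$, which need not equal $\{1,2\}$; it can be $\{1,3\}$ or $\{2,3\}$ (e.g.\ when $A=\{0,2\}$ or $A=\{0,1\}$). What your argument actually needs, and what the check confirms, is only that this pair avoids $0$, that pairs avoiding $0$ remain pairs avoiding $0$ (or collapse) at positions with $0\in A_m$, and that they collapse to $\{0\}$ at the first position with $0\notin A_m$.
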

Theorem \ref{thm_conv_1d1d2} covers the game $\imark(\{1\},\{2,3\})$ previously studied in \cite{FN}.

As it turns out, all cases of the form $2<d_1<d_2$, as well as all cases of the form $d_1=2$, $d_2\ge 4$, are relatively easy to prove, whereas the specific case $d_1=2$, $d_2=3$ requires a somewhat more complicated analysis.

\begin{definition}
A position $n$ is called a \emph{bottleneck} if $n$ is not divisible by $d_1$ nor by $d_2$.
\end{definition}

If $n$ is a bottleneck then its only option is $n-1$, and hence $\mathcal G(n)\in\{0,1\}$. Furthermore, if $n$ is a bottleneck then either $\mathcal G(n-1)=0$ or $\mathcal G(n)=0$.

\begin{definition}
A position $n$ is called \emph{$d_i$-conducive} for $i\in\{1,2\}$ if the position $d_in+1$ is a bottleneck.
\end{definition}

The significance of conducive positions is the following: Suppose position $n$ is $d_i$-conducive with $\mathcal G(n)=0$, and suppose we are computing a guess sequence $\mathcal G_{\overline\sigma}$ starting from some position $m<d_in$. Then this guess sequence will set $\mathcal G_{\overline\sigma}(d_in)>0$ (since $n$ is a division follower of $d_in$), and then it will converge to the correct value $\mathcal G_{\overline\sigma}(d_in+1)=\mathcal G(d_in+1)=0$. Hence, we observe the following:

\begin{observation}\label{obs_consecutive_bottl}
	Suppose positions $n$ and $n+1$ are both $d_i$-conducive, and suppose position $n+1$ is also a bottleneck. Then every guess sequence $\mathcal G_{\overline\sigma}$ starting from some position $m<d_in$ will have converged to the correct value by position $d_i(n+1)+1$.
\end{observation}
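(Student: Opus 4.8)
The plan is to reduce the statement to the single mechanism already isolated in the paragraph preceding it, and then to discharge the auxiliary hypothesis $\mathcal G(n)=0$ used there by a case split. Since $S=\{1\}$ we have $s=1$, so a guess seed is a single value and the whole guess sequence $\mathcal G_{\overline\sigma}$ is determined from that value by the recurrence, which reads the previous guessed value together with the \emph{true} Grundy values of all division followers. In particular, once two guess sequences agree at one position they agree at every later position, since each subsequent step feeds on the common previous value and the same true division-follower values. Thus ``having converged to the correct value by position $p$'' just means that all guess sequences agree at $p$; and because one of the seeds in $\Sigma_m$ is the correct one, this common value must equal $\mathcal G(p)$. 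It therefore suffices to exhibit a single position $\le d_i(n+1)+1$ at which all guess sequences starting from some $m<d_in$ coincide, and then invoke persistence.

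The engine I would use is the following restatement of the conducive mechanism. If a position $p$ is $d_i$-conducive and $\mathcal G(p)=0$, then for any guess sequence with seed position $m<d_ip$ we have $\mathcal G_{\overline\sigma}(d_ip)\ge 1$: indeed $p=(d_ip)/d_i$ is a division follower of $d_ip$, its true value $\mathcal G(p)=0$ is used in the $\mex$, so $0$ is excluded and the value is positive. Because $p$ is $d_i$-conducive, $d_ip+1$ is a bottleneck whose only follower is $d_ip$; hence $\mathcal G_{\overline\sigma}(d_ip+1)=\mex\{\mathcal G_{\overline\sigma}(d_ip)\}=0$ for every seed, matching the true value $\mathcal G(d_ip+1)=0$. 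So all guess sequences starting below $d_ip$ have converged (to $0$) by position $d_ip+1$.

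Next I would case-split using the bottleneck property. Since $n+1$ is a bottleneck, either $\mathcal G(n)=0$ or $\mathcal G(n+1)=0$. In the first case I apply the engine with $p=n$ (legitimate because $n$ is $d_i$-conducive): every guess sequence starting from $m<d_in$ has converged by position $d_in+1$, and since $d_in+1\le d_i(n+1)+1$, persistence gives convergence by $d_i(n+1)+1$ as well. In the second case I apply the engine with $p=n+1$ (legitimate because $n+1$ is $d_i$-conducive): any seed position $m<d_in$ also satisfies $m<d_i(n+1)$, so the engine yields convergence by position $d_i(n+1)+1$ directly. In either case the claim follows.

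The argument is short, and I expect no serious obstacle; the points that need care are purely bookkeeping. First, one must check that the positions $d_ip$ and $d_ip+1$ at which the engine acts are genuinely computed by the recurrence rather than being the seed value, which holds precisely because $m<d_ip$. Second, I would make explicit \emph{why both} $n$ and $n+1$ are required to be conducive: the bottleneck property guarantees that one of $\mathcal G(n),\mathcal G(n+1)$ vanishes but not which, so the mechanism must be available at whichever of the two sites carries Grundy value $0$. Finally, I would note that the weaker (later) bound $d_i(n+1)+1$ is exactly what the statement asserts, so that persistence of convergence cleanly absorbs the earlier convergence position $d_in+1$ arising in the first case.
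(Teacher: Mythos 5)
Your proposal is correct and follows exactly the argument the paper intends: the Observation is stated without a separate proof because it is meant to follow from the preceding paragraph (the conducive mechanism at a position with Grundy value $0$) combined with the earlier remark that a bottleneck $n+1$ forces $\mathcal G(n)=0$ or $\mathcal G(n+1)=0$, which is precisely your case split. Your write-up merely makes explicit the persistence of agreement (immediate since $s=1$) and the bookkeeping that the paper leaves implicit in the word ``Hence.''
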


\begin{lemma}
	In every game of the form $\imark(\{1\},\{2,d_2\})$, $d_2\ge 4$, convergence occurs in at most $4d_2+3$ steps.
\end{lemma}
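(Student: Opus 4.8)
Since $S=\{1\}$, we have $s=\max S=1$, so for any starting position $n_0$ each guess seed consists of a single value $\sigma_{n_0}$, and ``convergence at position $p$'' simply means that all guess sequences agree at $p$. The plan is to reduce the lemma to a residue-counting statement via Observation~\ref{obs_consecutive_bottl}. Concretely, it suffices to exhibit, for every starting position $n_0$, a position $n$ with $2n>n_0$ such that $n$ and $n+1$ are both $2$-conducive and $n+1$ is a bottleneck: Observation~\ref{obs_consecutive_bottl} (with $d_i=d_1=2$) then guarantees that every guess sequence starting at $n_0$ has converged by position $2(n+1)+1=2n+3$. Thus the bound $c=4d_2+3$ will follow once I show that such an $n$ can always be found with $n\le n_0/2+2d_2$, since then $2n+3\le n_0+4d_2+3$.

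The next step is to translate the three structural requirements into divisibility conditions. Recalling $d_1=2$, the position $n+1$ is a bottleneck iff $n+1$ is odd and $d_2\nmid(n+1)$, i.e.\ $n$ is even and $d_2\nmid(n+1)$; position $n$ is $2$-conducive iff $2n+1$ is a bottleneck, i.e.\ $d_2\nmid(2n+1)$ (non-divisibility by $2$ being automatic since $2n+1$ is odd); and position $n+1$ is $2$-conducive iff $d_2\nmid(2n+3)$. So I must find an even $n$ in the window $(n_0/2,\,n_0/2+2d_2]$ satisfying $d_2\nmid(n+1)$, $d_2\nmid(2n+1)$, and $d_2\nmid(2n+3)$. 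When $2$ is invertible modulo $d_2$, each of these three conditions forbids a single residue of $n$, namely $n\equiv-1$, $n\equiv-2^{-1}$, and $n\equiv-3\cdot2^{-1}\pmod{d_2}$, so at most three residue classes are excluded.

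The residue count then splits on the parity of $d_2$, which I expect to be the only delicate point. If $d_2$ is even, then $2n+1$ and $2n+3$ are odd and hence never divisible by $d_2$, and since $n+1$ is odd it too is never divisible by $d_2$; thus every even $n$ in the window is valid, and the smallest even $n>n_0/2$ already works. If $d_2$ is odd (so $d_2\ge5$), then $2$ is invertible modulo $d_2$, and as $n$ ranges over $d_2$ consecutive even integers its residues modulo $d_2$ run through all classes exactly once; since at most three classes are forbidden and $d_2-3\ge2$, at least one of these even integers is valid. In either case a valid even $n$ occurs among the first $d_2$ even integers exceeding $n_0/2$, so $n\le n_0/2+2+2(d_2-1)=n_0/2+2d_2$, giving convergence by position $2n+3\le n_0+4d_2+3$. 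The main obstacle is making this window-placement count airtight—ensuring the smallest valid even $n$ really lies at or below $n_0/2+2d_2$ irrespective of the parity of $n_0$ and of where the forbidden residues fall—together with checking the degenerate behavior for small $n_0$, where the relevant positions remain valid and the same argument applies.
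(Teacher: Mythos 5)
Your proof is correct and follows essentially the same route as the paper: both reduce the lemma to Observation~\ref{obs_consecutive_bottl} with $d_i=2$, by locating consecutive $2$-conducive positions $n$, $n+1$ with $n+1$ a bottleneck within distance roughly $2d_2$ of $n_0/2$, which yields the same bound $4d_2+3$. The only difference is how that $n$ is found: the paper simply takes $n=2d_2k$ (so $2n+1\equiv 1$ and $2n+3\equiv 3\pmod{d_2}$ are odd and, since $d_2\ge 4$, not multiples of $d_2$, hence bottlenecks), whereas you count the at most three forbidden residues modulo $d_2$ and apply pigeonhole over a window of $d_2$ consecutive even integers---a slightly longer but equally valid finish, of which the paper's explicit choice is a special case.
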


\begin{proof}
	Let $m=4d_2k$. Then positions $m+1$ and $m+3$ are both bottlenecks, so $n=m/2=2d_2k$ and $n+1=2d_2k+1$ are both $2$-conducive, and $n+1$ is also a bottleneck. Hence, by Observation \ref{obs_consecutive_bottl}, every guess sequence starting from position $m-1$ or earlier will converge by position $m+3$. Since positions of the form $4d_2k$ occur every $4d_2$ steps, the claim follows.
\end{proof}

\begin{lemma}
	In every game of the form $\imark(\{1\},\{d_1,d_2\})$, $2<d_1<d_2$, convergence occurs in at most $d_1^2d_2^2+d_2+1$ steps.
\end{lemma}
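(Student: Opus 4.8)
The plan is to mirror the proof of the previous lemma, again driving convergence through Observation~\ref{obs_consecutive_bottl}, but now taking the smaller divisor $d_1$ as the ``conducive'' divisor. Since $s=\max S=1$, convergence just means that all guess sequences agree at a single position, and the mechanism I want to exploit is a bottleneck at which the value $0$ is pinned down: if $a$ and $a+1$ are both $d_1$-conducive and $a+1$ is a bottleneck, then by the Observation every guess sequence starting at a position $p<d_1 a$ has converged to the correct value by position $d_1(a+1)+1$. Thus it suffices to show that such a ``good'' base $a$ can always be found a bounded distance above $p/d_1$, and then to read off the number of steps as $d_1(a+1)+1-p$.

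First I would translate the three requirements on $a$---that $a+1$ be a bottleneck and that both $a$ and $a+1$ be $d_1$-conducive---into congruences. Because $d_1a+1$ and $d_1(a+1)+1=d_1a+d_1+1$ are automatically coprime to $d_1$, being a bottleneck constrains them only modulo $d_2$, and together with the bottleneck condition on $a+1$ this yields
\begin{equation*}
a\not\equiv -1\pmod{d_1},\qquad a\not\equiv -1\pmod{d_2},\qquad d_1a\not\equiv -1\pmod{d_2},\qquad d_1a\not\equiv -(d_1+1)\pmod{d_2}.
\end{equation*}
Each condition is periodic in $a$ with period dividing $d_1d_2$, so the set of good $a$ is a union of residue classes modulo $d_1d_2$.

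The crux is to show this set is nonempty with the gap between consecutive good values at most $d_1d_2$. I would split into two cases. If $\gcd(d_1,d_2)=1$, then modulo $d_1$ a single residue is forbidden, leaving at least $d_1-1\ge 2$; modulo $d_2$ at most three residues are forbidden (here $d_1$ is invertible, so the last two conditions each pin one value of $a$), leaving at least $d_2-3\ge 1$ since $d_2>d_1\ge 3$ forces $d_2\ge 4$; the Chinese Remainder Theorem then produces a good residue modulo $d_1d_2$. If $\gcd(d_1,d_2)>1$, then neither $d_1a+1$ nor $d_1a+d_1+1$ can ever be divisible by $d_2$ (reduce modulo the common factor), so the two conduciveness conditions modulo $d_2$ hold automatically and $a\equiv 0$ already works. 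In either case a good $a$ occurs in every block of $d_1d_2$ consecutive integers.

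Finally, given a starting position $p$, I would take $a$ to be the smallest good base with $d_1a\ge p+1$; by the gap bound $a\le\lceil (p+1)/d_1\rceil+d_1d_2-1$. Applying Observation~\ref{obs_consecutive_bottl} to the pair $a,a+1$ (its starting position being $p$, which satisfies $p<d_1a$) gives convergence by position $d_1(a+1)+1$, hence within $d_1(a+1)+1-p$ steps; substituting the bound on $a$ cancels the $p$-dependence and leaves at most $d_1^2d_2+d_1+2$, comfortably within the claimed $d_1^2d_2^2+d_2+1$ since $d_1^2d_2\le d_1^2d_2^2$ and $d_1+2\le d_2+1$. The main obstacle is the nonemptiness-and-gap argument of the previous paragraph; once the residue bookkeeping is handled correctly---in particular noticing that a common factor of $d_1$ and $d_2$ trivializes the conduciveness conditions, while coprimality is exactly what the $d_2\ge 4$ count needs---the step-count estimate is routine.
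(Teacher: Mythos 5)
Your proof is correct. Both you and the paper drive convergence through Observation~\ref{obs_consecutive_bottl}, but the way you locate the required pair of consecutive conducive positions is genuinely different. The paper takes $m=d_1^2d_2^2k$, notes that $m+1$ is automatically a bottleneck, and proves that at least one of $m+d_1+1$, $m+d_2+1$ must also be a bottleneck (otherwise $d_2\mid d_1+1$ and $d_1\mid d_2+1$, which together with $d_1<d_2$ force $d_1\mid 2$, contradicting $d_1>2$); it then applies the Observation with $n=m/d_i$ for whichever $i$ works, so the conducive divisor is chosen \emph{adaptively}, and no counting or coprimality analysis is needed. You instead fix $d_1$ as the conducive divisor once and for all, translate the requirements into four congruence conditions on the base $a$, and establish existence in every window of $d_1d_2$ consecutive integers by a Chinese-Remainder count in the coprime case (where $2<d_1<d_2$ gives $d_2\ge 4$, exactly what the mod-$d_2$ count needs) and by noting that a common factor of $d_1,d_2$ trivializes the conduciveness conditions in the non-coprime case. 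The payoff of your route is a sharper constant, roughly $d_1^2d_2+d_1+2$ versus the paper's $d_1^2d_2^2+d_2+1$: the paper pays an extra factor of $d_2$ because its good positions recur only every $d_1^2d_2^2$ steps, while yours recur every $d_1d_2$ values of $a$, i.e., every $d_1^2d_2$ positions. The cost is a case split and residue bookkeeping that the paper's two-line dichotomy avoids. It is worth noting that both arguments use the hypothesis $d_1>2$ in an essential way: the paper in its divisibility contradiction, you in the bound $d_2-3\ge 1$.
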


\begin{proof}
	Let $m=d_1^2d_2^2k$. Then position $m+1$ is a bottleneck. We now claim that at least one of the positions $m+d_1+1$, $m+d_2+1$ must also be a bottleneck. Indeed, suppose the opposite is the case. Then $d_2|(d_1+1)$ and $d_1|(d_2+1)$. The former property, together with $d_1<d_2$, implies $d_1=d_2-1$. But then $d_1$ divides both $d_2-1$ and $d_2+1$, implying that $d_1|2$. Contradiction.
	
	Choose $i\in\{1,2\}$ so that position $m+d_i+1$ is a bottleneck. Then $n=m/d_i$ and $n+1$ are both $d_i$-conducive, and $n+1$ is also a bottleneck. Hence, the claim follows from Observation \ref{obs_consecutive_bottl} as before.
\end{proof}

\begin{lemma}\label{lem_convS1D23}
	In the game $\imark(\{1\},\{2,3\})$ convergence occurs in at most $64$ steps.
\end{lemma}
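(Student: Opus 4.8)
The plan is to exploit the fact that $s=\max S=1$, which makes the guess sequences extremely rigid. Fix a starting position $n$, and for $m>n$ let $A(m)$ denote the set of actual Grundy values of the division followers of $m$ (so $A(m)\subseteq\{\mathcal G(m/2),\mathcal G(m/3)\}$, and $A(m)=\emptyset$ exactly when $m$ is a bottleneck). Every guess sequence satisfies $\mathcal G_{\overline\sigma}(m)=\mex(\{\mathcal G_{\overline\sigma}(m-1)\}\cup A(m))$, and writing $\mu_m:=\mex A(m)$ and $\mu'_m:=\mex(A(m)\cup\{\mu_m\})$, a one-line computation gives
\[
\mex(\{x\}\cup A(m))=\begin{cases}\mu_m & \text{if } x\neq\mu_m,\\ \mu'_m & \text{if } x=\mu_m.\end{cases}
\]
Hence, letting $V_m$ be the set of values taken at position $m$ by the (finitely many) guess sequences, we have $V_m\subseteq\{\mu_m,\mu'_m\}$ and in particular $|V_m|\le 2$ for every $m>n$. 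Moreover, in the pre-convergence regime ($|V_{m-1}|=2$), convergence occurs at $m$ (that is, $|V_m|=1$) if and only if $\mu_m\notin V_{m-1}$, and $|V_m|=1$ is preserved thereafter. So the first step is to record this reduction: convergence is equivalent to reaching an $m$ with $\mu_m\notin V_{m-1}$.

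The second step is to isolate a robust mechanism forcing $\mu_m\notin V_{m-1}$. Call $m$ a \emph{triggering position} if some division follower of $m$ is a $P$-position; equivalently $0\in A(m)$, i.e.\ $\mu_m\ge 1$. At a triggering position both $\mu_m$ and $\mu'_m$ are positive, so $0\notin V_m$. Furthermore, once $0\notin V_{m-1}$ the value $0$ cannot re-enter: if $\mu_m\ge 1$ then again $0\notin V_m$, while if $\mu_m=0$ then every element of $V_{m-1}$ (all nonzero) is sent to $0$, giving $V_m=\{0\}$, i.e.\ convergence. Thus, after the first triggering position $m_0>n$, convergence is guaranteed at the next position with $\mu=0$. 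Since every bottleneck has $A=\emptyset$ and hence $\mu=0$, and since any four consecutive integers contain a residue $\pm1\pmod 6$ and therefore a bottleneck, convergence occurs by position $m_0+4$. This is a strict generalization of Observation~\ref{obs_consecutive_bottl}, which is the special case where the bottleneck immediately follows $m_0$; the extra slack is exactly what is unavailable in the $\{2,3\}$ case through a single conducive pair.

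It then remains to bound $m_0-n$, the distance to the first triggering position. The triggering positions contain the set $\{2p:\mathcal G(p)=0\}$, whose consecutive gaps are at most twice the maximal gap $G_0$ between consecutive $0$'s in the Grundy sequence of $\imark(\{1\},\{2,3\})$; hence every window of $2G_0$ consecutive integers contains a triggering position, so $m_0\le n+2G_0$ and convergence occurs within $2G_0+4$ steps. Substituting the explicit gap bound for $0$'s proved by Friman and Nivasch~\cite{FN} yields the stated value $64$ (the finitely many small starting positions, where the asymptotic gap bound is not yet in force, are verified directly). I expect the genuine obstacle to be precisely this gap bound on $0$'s: the dynamical reduction above is elementary and self-contained, but boundedly-spaced $P$-positions is a non-self-referential fact about the Grundy sequence whose proof is the hard input, and its exact constant is what pins the final bound to $64$ rather than something smaller. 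A minor secondary point is the base of the argument at position $n+1$, where up to four seed values must first collapse into $V_{n+1}\subseteq\{\mu_{n+1},\mu'_{n+1}\}$ before the persistence argument applies.
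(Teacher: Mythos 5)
Your core reduction is correct, and it takes a genuinely different route from the paper's. The paper works through Observation~\ref{obs_consecutive_bottl}: it needs a position $p$ with $\mathcal G(p)=0$ that is moreover $2$-conducive (i.e.\ $p\equiv 0,2\pmod 3$), and the bulk of its proof (Lemma~\ref{lem_five_consec} plus the unnamed ten-window lemma that follows it) is devoted to showing that such positions occur with bounded gaps. Your mechanism decouples the two roles: any position $p$ with $\mathcal G(p)=0$ makes $2p$ a ``triggering'' position, after which $0$ is excluded from every guess value, and the collapse to the common value $0$ then happens at the next position with $\mu=0$ --- in particular at the next bottleneck, which is at most $4$ steps away since bottlenecks are the positions $\equiv\pm 1\pmod 6$. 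As you note, the paper's observation is the special case where the bottleneck immediately follows the triggering position, and it is exactly your extra flexibility that lets you bypass the paper's two inner lemmas: you only need $0$'s with bounded gaps at arbitrary positions, not at conducive ones.

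However, your final step as written does not prove the stated bound, and your guess about where the difficulty lies is inverted. You defer the $0$-gap bound $G_0$ to \cite{FN} and assert that its constant yields $64$; since you never exhibit that constant, the inequality $2G_0+4\le 64$ is unverified. But you do not need \cite{FN} at all: the bound you want is a one-liner from facts already on your table. Every bottleneck $b$ satisfies $\mathcal G(b)=\mex\{\mathcal G(b-1)\}$, so $0\in\{\mathcal G(b-1),\mathcal G(b)\}$ (the paper records exactly this right after defining bottlenecks), and consecutive bottlenecks differ by at most $4$; hence consecutive $0$'s in the Grundy sequence are at most $5$ apart, i.e.\ $G_0\le 5$. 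Substituting gives convergence within $2\cdot 5+4=14$ steps, comfortably below $64$ and in fact below the paper's own constant. So the ``hard input'' you anticipated is trivial; the hard work in the paper is forced by its less flexible convergence mechanism, which your argument sidesteps. With that one-line substitution in place of the citation, your proof is complete and sharper than the paper's.
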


\begin{proof}
	Note that in the game $\imark(\{1\},\{2,3\})$, the bottlenecks are the positions of the form $6k+1$ and $6k+5$. Therefore, the $2$-conducive positions are those of the form $3k$ and $3k+2$.
	
	\begin{lemma}\label{lem_five_consec}
		For every $n$ we have $\mathcal G(3(n+k))\neq 0$ for some $0\le k\le 4$. Meaning, there cannot be five consecutive positions divisible by $3$, all with Grundy value $0$.
	\end{lemma}
	
	\begin{proof}
		Since position $6k+1$ is a bottleneck, we have $\mathcal G(6k)=0$ or $\mathcal G(6k+1)=0$. Therefore (by considering $3\in D$), we have $\mathcal G(18k)\neq 0$ or $\mathcal G(18k+3)\neq 0$.
		
		Similarly, since position $6k+5$ is a bottleneck, we have $\mathcal G(6k+4)=0$ or $\mathcal G(6k+5)=0$. Therefore, we have $\mathcal G(18k+12)\neq 0$ or $\mathcal G(18+15)\neq 0$.
		
		Hence, the longest possible run of consecutive positions divisible by $3$ having Grundy value $0$ is of the form $18k+3, 18k+6, 18k+9, 18k+12$, which is of length $4$.
	\end{proof}
	
	\begin{lemma}
		For every $n$ we have $\mathcal G(3(n+k)+2) = 0$ or $\mathcal G(3(n+k)+3) = 0$ for some $0\le k\le 9$.
	\end{lemma}
	
	\begin{proof}
		Suppose for a contradiction that none of these Grundy values are $0$. Then for each $0\le k\le 9$ one of the division followers of $3(n+k)+3$ must have Grundy value $0$. If $n+k$ is even then the only divisor is $3$, while if $n+k$ is odd then both $2$ and $3$ are divisors. However, after dividing by $3$ we cannot have two adjacent positions with Grundy value $0$. Hence, whenever $n+k$ is even, the follower obtained by dividing by $3$ must have Grundy value $0$, whereas whenever $n+k$ is odd, the follower obtained by dividing by $2$ must have Grundy value $0$. Hence, after dividing by $2$ we have five consecutive positions that are divisible by $3$ and all have Grundy value $0$. This contradicts Lemma \ref{lem_five_consec}.
	\end{proof}
	Hence, within $64$ steps of any starting position we will encounter a bottleneck of the form $m=6k+1$ or $m=6k+5$ such that $(m-1)/2$ has Grundy value $0$.	This concludes the proof of Lemma \ref{lem_convS1D23}, and thus of Theorem \ref{thm_conv_1d1d2}.
\end{proof}

\section{Experimental results}

Using a computer program (see \texttt{measure\textunderscore convergence.cpp} in the ancillary files of the arXiv version) we checked how many steps it takes for convergence to occur in different $\imark$ games. See Table \ref{table_convergence}.

\begin{table}
	\centering
	\begin{tabular}{c|c|c}
		game&experimental&proven\\\hline
		$\imark(\{1\},\{2,3\})$&10&64\\
		$\imark(\{1\},\{2,4\})$&5&19\\
		$\imark(\{1\},\{2,5\})$&10&23\\
		$\imark(\{1\},\{3,4\})$&14&149\\
		$\imark(\{1\},\{3,5\})$&13&231\\
		$\imark(\{1\},\{4,5\})$&18&406\\
		$\imark(\{2\},\{2,3\})$&14&---\\
		$\imark(\{2\},\{2,4\})$&no convergence&---\\
		$\imark(\{2\},\{3,4\})$&18&---\\
		$\imark(\{3\},\{2,3\})$&24&---\\
		$\imark(\{1,2\},\{2,3\})$&21&---\\
		$\imark(\{1,3\},\{2,3\})$&no convergence&---\\
		$\imark(\{1,4\},\{2,3\})$&$6551^*$&---\\
		$\imark(\{2,3\},\{2,3\})$&$5816^*$&---\\
		$\imark(\{1\},\{2,3,5\})$&12&---\\
	\end{tabular}
	\caption{\label{table_convergence}Experimental and proven upper bounds for number of steps to convergence, for different $\imark$ games. The experimental upper bounds are for starting positions up to $10^6$. Proven upper bounds are only given for games of the form $\imark(\{1\},\{d_1,d_2\})$. Experimental bounds marked~${}^*$ seem unreliable, since the measurement up to starting position $n$ increases steadily as $n$ itself increases.}
\end{table}

We also implemented the convergence-based algorithm for games of the form $\imark(\{1\},\allowbreak\{d_1,d_2\})$ (see \texttt{recursive\textunderscore alg.cpp} in the ancillary files). For the games shown in Table \ref{table_Gvalues}, we checked that the convergence-based algorithm agrees with the naive algorithm up to $n=10^6$, and we also computed $\mathcal G(n)$ for the large values of $n$ shown in the table.

\begin{table}
	\centering
	\begin{tabular}{c|c}
		game & $\mathcal G(n), \ldots, \mathcal G(n+30)$ for $n=10^{18}$\\\hline
		$\imark(\{1\},\{2,3\})$& 2 0 1 0 1 2 0 1 0 1 2 1 0 1 0 1 2 0 2 0 1 0 2 0 1 0 3 0 1 2 0\\
		$\imark(\{1\},\{2,4\})$&3 0 1 0 2 0 1 0 3 0 1 0 2 0 1 0 1 0 1 0 2 0 1 0 3 0 1 0 2 0 1\\
		$\imark(\{1\},\{2,5\})$&1 0 1 0 2 1 2 0 1 0 2 0 1 0 1 2 0 1 2 0 1 0 1 0 2 0 1 0 2 0 1\\
		$\imark(\{1\},\{3,4\})$&2 0 1 0 1 0 1 0 2 0 1 0 1 0 1 0 1 0 1 0 3 0 1 2 1 0 1 0 2 1 0\\
		$\imark(\{1\},\{3,5\})$&1 0 1 0 1 0 1 0 1 0 1 0 1 0 1 0 1 0 1 0 1 0 1 0 1 0 1 0 1 0 1\\
		$\imark(\{1\},\{4,5\})$&2 0 1 0 1 2 0 1 0 1 0 1 2 0 1 2 0 1 0 1 3 0 1 0 2 0 1 0 1 0 1
	\end{tabular}
	\caption{\label{table_Gvalues}Grundy values of some large $\imark$ positions, computed using the convergence-based algorithm. These computations took only a fraction of a second.}
\end{table}

\section{\boldmath $\imark$ games with no convergence}\label{sec_no_conv}

Not all $\imark$ instances experience convergence.

If $D=\emptyset$ there is no convergence, because if the guess seed is a translation of the real Grundy sequence, then the guess sequence will stay that way forever.

Hence, if all elements in $S\cup D$ are divisible by some number $k>1$, then there is no convergence, since from positions not divisible by $k$ one cannot reach any position divisible by $k$, and positions not divisible by $k$ have no divisor options, so they behave as if $D=\emptyset$.

Here is a more interesting example of an $\imark$ game with no convergence:

\begin{lemma}
	The game $\imark(\{1, 3\}, \{2, 3\})$ has no convergence.
\end{lemma}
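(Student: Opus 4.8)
The plan is to exhibit an explicit infinite family of starting positions at which convergence provably fails. The cleanest obstruction is the one already described informally in the excerpt for $D=\emptyset$: if some false guess seed produces a guess sequence that is a \emph{shifted copy} of the true Grundy sequence, and if this shift never gets corrected, then two distinct seeds disagree arbitrarily far from the start, so no finite $c$ works. The key structural feature I would exploit is that $\gcd(S\cup D)$ considerations do not apply here (since $1\in S$), so the failure must come from a subtler self-similarity. Concretely, I would first analyze the true Grundy sequence $\mathcal G(0),\mathcal G(1),\dots$ for $\imark(\{1,3\},\{2,3\})$, compute enough of it by hand or cite the computer data, and look for a long arithmetic-progression family of positions where the Grundy sequence is eventually periodic or otherwise rigidly structured.

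The main idea I would pursue is the following. Suppose we can find a starting position $n$ and an \emph{incorrect} guess seed $\overline\sigma$ (a pair of values for positions $n$ and $n+1$, since $s=\max S=3$, actually a triple for positions $n,n+1,n+2$) such that the resulting guess sequence $\mathcal G_{\overline\sigma}$ satisfies $\mathcal G_{\overline\sigma}(m)=\mathcal G(m')$ for all $m$ in a long run, where $m'=m-t$ for some fixed shift $t>0$ tied to the periodic block length. Because the division followers feed in the \emph{true} values $\mathcal G(m/2)$ and $\mathcal G(m/3)$, the obstacle is that a shifted guess must remain consistent with unshifted division data. The trick is to choose $n$ divisible by a high power of $6$, so that within the relevant window \emph{no} position is divisible by $2$ or $3$ except in a controlled, periodic pattern, making the division inputs themselves periodic with a matching period. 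Then the $\mex$ recursion using only the subtraction followers (at offsets $1$ and $3$) together with periodic division inputs will preserve the shift indefinitely.

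The concrete steps, in order, are: first, establish by direct computation that the Grundy sequence of $\imark(\{1,3\},\{2,3\})$ contains an eventually-periodic segment, or identify a self-similar substructure under the map $n\mapsto 2n$ or $n\mapsto 3n$; second, identify a period $p$ and a window length $L$ such that over positions $[n, n+L]$ the sequence of ``active divisors'' (which of $2,3$ divide each position) repeats with period dividing $p$, and the true division-follower Grundy values are also $p$-periodic on this window; third, exhibit two distinct seeds whose guess sequences remain a nonzero shift apart throughout the window, verifying that the local $\mex$ recursion $\mathcal G_{\overline\sigma}(m)=\mex\{\mathcal G_{\overline\sigma}(m-1),\mathcal G_{\overline\sigma}(m-3)\}\cup\{\mathcal G(m/d):d\in D,\ d\mid m\}$ reproduces the shifted pattern; fourth, take $L\to\infty$ along a family of such $n$ (for instance $n$ ranging over a suitable residue class growing without bound) to conclude that no finite convergence bound $c$ can hold. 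The last step requires showing the disagreement window can be made arbitrarily long, which forces the two guess sequences to differ at positions $n+c,\dots,n+c+s-1$ for every prospective $c$.

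The hard part will be step three: constructing an \emph{incorrect} seed whose guess sequence genuinely differs from the true sequence yet is never forced back into agreement by the division inputs. The danger is that the true division-follower values, which are injected at every multiple of $2$ and $3$, will break any shifted pattern and trigger convergence after all. The resolution I anticipate is to restrict attention to a window lying between two consecutive multiples of a large power of $6$, so that the only division events inside the window occur at positions whose quotients land in an already-periodic lower segment of the true sequence; there the injected values coincide with what the shifted pattern predicts, so consistency is maintained. Making this self-consistency precise — essentially a fixed-point argument showing a shifted periodic pattern is stable under the combined subtraction-plus-true-division recursion — is the crux, and it is exactly where the interplay between $S=\{1,3\}$ and $D=\{2,3\}$ (in particular the extra subtraction option $3\in S$, which coincides with $3\in D$) is decisive for preventing the usual correction that happens in simpler instances.
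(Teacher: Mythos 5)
Your plan has a genuine gap at exactly the point you yourself flag as the crux (step three), and the gap is foundational rather than technical. Your construction needs the \emph{exact} values of the true Grundy sequence to be periodic on arbitrarily long stretches --- both on the window $[n,n+L]$ itself (so that a ``shifted copy'' is well defined) and, recursively, on the quotient intervals $[n/2,(n+L)/2]$ and $[n/3,(n+L)/3]$ (so that the injected division values agree with what the shift predicts). Nothing establishes such exact periodicity, and it is very unlikely to hold: the true sequence of $\imark(\{1,3\},\{2,3\})$ is only known to obey a \emph{symbolic} period-6 pattern $(N,0,N,?,N,0)^*$ from position $6$ on, where $N$ means ``nonzero'' and $?$ is unconstrained --- the actual values realizing $N$ and $?$ fluctuate, and no eventually periodic segment of exact values is known. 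Your device of taking $n$ divisible by a high power of $6$ does not help: which positions of $[n,n+L]$ are divisible by $2$ or $3$ is automatically $6$-periodic for \emph{every} $n$; the problem is the injected values $\mathcal G(m/2)$ and $\mathcal G(m/3)$, which sweep through fresh intervals of length about $L/2$ and $L/3$ and would themselves have to be exactly periodic --- a requirement that regresses to smaller and smaller scales with no base case at which the true sequence is actually periodic. So the ``fixed-point/self-consistency'' argument you defer is not a verification step; in the form you set it up (exact shifted copies of the true sequence) it is the entire difficulty, and most likely false.

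The paper's proof sidesteps this by arguing with value \emph{classes} rather than exact values. It first verifies that the true sequence follows the symbolic period-6 pattern above, and then that the single incorrect seed $(0,1,T)$ (with $T$ denoting Grundy value $2$ or $3$) placed at any position $12k$ generates a guess sequence following the symbolic period-12 pattern $(0,1,T,N,0,1,L,1,0,N,T,1)^*$: the mex recursion, fed only the symbolic information about the true division-follower values, already pins down the class of the guess value at every position. Since this guess pattern has $0$ (respectively $1$) at residues where the true pattern forces $N$ (respectively $0$), the guess sequence disagrees with the correct one at infinitely many positions, so convergence never occurs from any starting position $12k$, giving non-convergence of the game. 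The lesson is that periodicity here lives at the level of symbolic constraints, not of exact Grundy values, and that is what makes the interlocking ``true pattern versus rogue pattern'' argument go through where a shifted-copy argument cannot even get started.
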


\begin{proof}
	For convenience, let $N$ denote a position with nonzero Grundy value (or an $N$-position), let $T$ denote a position with Grundy value of $2$ or $3$, let $L$ denote a position with Grundy value different from $1$, and let $?$ denote a position with arbitrary Grundy value.
	
	First, it can be easily verified that the actual Grundy sequence, starting from position $6$, is of the form $(N, 0, N, ?, N, 0)^*$, where ${}^*$ denotes repetition forever.
	
	\begin{figure}
		\centering
		\includegraphics[width=16cm]{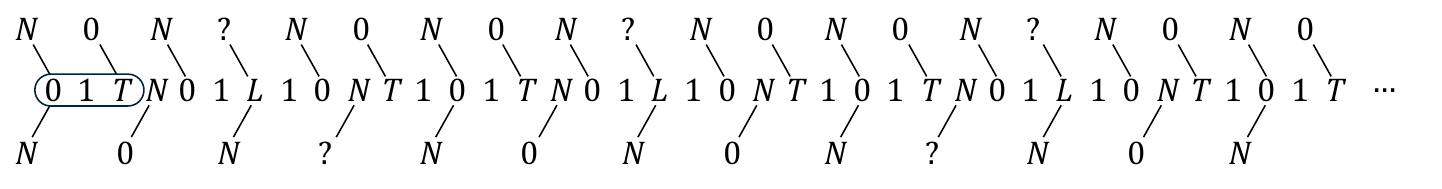}
		\caption{\label{fig_S13D23}Non-convergence in $\imark(\{1,3\},\{2,3\})$. The upper and lower rows contain the correct Grundy values, and the central row contains a guess sequence that differs forever from the correct Grundy sequence. Here $N$ denotes a nonzero number, $T$ denotes $2$ or $3$, $L$ denotes a number different from $1$, and $?$ denotes an arbitrary number.}
	\end{figure}
	
	Next, it can be easily verified that a seed of the form $(0, 1, T)$ starting at a position of the form $12k$ will form a guess sequence of the form $(0, 1, T, N, 0, 1, L, 1, 0, N, T, 1)^*$. This guess sequence keeps disagreeing forever with the actual Grundy sequence. See Figure \ref{fig_S13D23}.
\end{proof}

\section{Concluding remarks}

Games of $\imark$ with two or more divisors ($|D|\ge 2$) present unique challenges. Sopena \cite{SOPENA201690} mentioned them as a subject of possible future research. One such game was partially studied in \cite{FN}. In this paper we presented a convergence-based approach that works for some of these games. While our approeach does not identify precise patterns in the Grundy sequences (as was done in \cite{FN,SOPENA201690} for single-divisor games), it does yield polynomial-time algorithms for computing Grundy values.

In Theorem \ref{thm_conv_1d1d2}, we showed that convergence occurs in a certain subset of $\imark$ games with $|D|\ge 2$. Our experiments indicate that convergence might occur in a wider subset of such games, though not in all of them. It would be interesting to expand Theorem \ref{thm_conv_1d1d2} to cover a wider class of $\imark$ games.

In Section \ref{sec_no_conv} we presented some examples of $\imark$ games with no convergence. In some cases, the convergence approach might perhaps still be salvaged, by making a modification tailored specifically to each game; namely, by not trying guess seeds that are known to be wrong. For example, for the case of $\imark(\{1,3\},\{2,3\})$, if we start from positions of the form $6k$ and only use guess seeds of the form $(N,0,N)$, then convergence always seems to occur within $26$ steps.

It would also be interesting to check whether the convergence approach presented in this paper can be applied to other games.

\bibliographystyle{plainurl}
\bibliography{imark_convergence}

\end{document}